\documentclass[11pt,reqno]{amsproc}



\usepackage{amsmath,amsfonts,amssymb,amsthm}
\usepackage[abbrev,lite,nobysame]{amsrefs}
\usepackage{mathrsfs}
\usepackage{graphics,graphicx}
\usepackage[usenames,dvipsnames]{color}
\usepackage{times}
\usepackage{bbm}
\usepackage[margin=1in]{geometry}
\usepackage[colorlinks=true, pdfstartview=FitV, linkcolor=blue, citecolor=blue, urlcolor=blue]{hyperref}



\makeatletter

\renewcommand\subsection{\@startsection{subsection}{2}%
\normalparindent{.5\linespacing\@plus.7\linespacing}{-.5em}
{\normalfont\bfseries}}

\renewcommand\subsubsection{\@startsection{subsubsection}{3}%
\normalparindent{.5\linespacing\@plus.7\linespacing}{-.5em}
{\normalfont\bfseries}}





\def\@tocline#1#2#3#4#5#6#7{\relax
  \ifnum #1>\c@tocdepth 
  \else
    \par \addpenalty\@secpenalty\addvspace{#2}%
    \begingroup \hyphenpenalty\@M
    \@ifempty{#4}{%
      \@tempdima\csname r@tocindent\number#1\endcsname\relax
    }{%
      \@tempdima#4\relax
    }%
    \parindent\z@ \leftskip#3\relax \advance\leftskip\@tempdima\relax
    \rightskip\@pnumwidth plus4em \parfillskip-\@pnumwidth
    #5\leavevmode\hskip-\@tempdima
      \ifcase #1
       \or\or \hskip 1em \or \hskip 2em \else \hskip 3em \fi%
      #6\nobreak\relax
    \dotfill\hbox to\@pnumwidth{\@tocpagenum{#7}}\par
    \nobreak
    \endgroup
  \fi}
\makeatother

\newtheorem{theorem}{Theorem}
\newtheorem{proposition}{Proposition}[section]
\newtheorem{lemma}[proposition]{Lemma}
\newtheorem{corollary}[proposition]{Corollary}

\theoremstyle{definition}
\newtheorem{definition}[proposition]{Definition}

\numberwithin{equation}{section}



\newcommand\e{{\rm e}}

\def\Re{{\rm Re}}

\def\Im{{\rm Im}}

\newcommand\de{{\partial}}

\newcommand{\norm}[1]{\left\lVert #1 \right\rVert}

\newcommand{\abs}[1]{\left\lvert #1 \right\rvert}




\newcommand\bn{{\boldsymbol n}}

\newcommand\bv{{\boldsymbol v}}

\newcommand\bx{{\boldsymbol x}}




\newcommand\cD{{\mathcal D}}



\newcommand\sfe{{\mathsf e}}

\usepackage{mathtools}

\newtheorem{asmp}{Assumption}

\usepackage{esint}
\newcommand{\N}{\mathbb{N}}

\newcommand{\Z}{\mathbb{Z}}
\newcommand{\R}{\mathbb{R}}

\newcommand{\inv}{^{-1}}

\newcommand{\dist}{\mathrm{dist}}
\newcommand{\sign}{\mathrm{sign}}

\newcommand{\pd}[1]{\partial_{#1}}

\newcommand{\td}[1]{\mathrm{d}#1}
\newcommand{\itd}[2][]{\,{#1}\td{#2}}

\newcommand{\Lapl}{\Delta}

\newcommand{\half}{\frac{1}{2}}
\newcommand{\thalf}{\tfrac{1}{2}}
\newcommand{\tendsto}{\longrightarrow}


\newcommand{\idot}[3][]{\langle#2,#3\rangle_{#1}}

\newcommand{\Lp}[1]{L^{#1}}
\newcommand{\Cts}[1]{C^{#1}}


\newcommand{\ball}{\mathcal{B}}


\newcommand{\bigO}{O}

\newcommand{\disc}{D}
\newcommand{\Radius}{R}
\newcommand{\Interval}{[0,\Radius]}
\newcommand{\circlegrp}{\mathbb{T}}
\newcommand{\init}[1]{#1^{in}}


\newcommand{\Ldisc}{\empty} 
\newcommand{\LinftLdisc}{\Lp\infty_z\Lp2_{r,\theta}} 
\newcommand{\LonetLdisc}{\Lp1_z\Lp2_{r,\theta}} 


\newcommand{\cone}{c} 

\newcommand{\nhdE}[1][_{\lambda,\delta}]{E#1}
\newcommand{\nhdEE}[1][_{\lambda,\delta}]{\mathcal{E}#1}

\newcommand{\Vee}{\mathcal{V}^m_{\lambda,\delta}}

\newcommand{\Hlam}{H_\lambda}
\newcommand{\Lam}[1][_{\nu,k}]{\Lambda#1}

\renewcommand{\Re}{\mathrm{Re}}
\renewcommand{\Im}{\mathrm{Im}}

 \mathtoolsset{showonlyrefs=true}

\begin{document}

\title[Enhanced diffusion in flows with rotational symmetry]{Diffusion enhancement and Taylor dispersion for rotationally symmetric flows  in discs and pipes}

\author[M. Coti Zelati]{Michele Coti Zelati}
\address{Department of Mathematics, Imperial College London, London, SW7 2AZ, UK}
\email{m.coti-zelati@imperial.ac.uk}

\author[M. Dolce]{Michele Dolce}
\address{Institute of Mathematics, EPFL, Station 8, 1015 Lausanne, Switzerland}
\email{michele.dolce@epfl.ch}

\author[C. Lo]{Chia-Chun Lo}
\address{Department of Mathematics, King’s College London, The Strand, London, WC2R 2LS, UK}
\email{chia-chun.lo@kcl.ac.uk}

\keywords{Enhanced diffusion, Taylor dispersion, radial flows, pipe flows}

\subjclass[2020]{35Q35, 47B44, 76F25}

\begin{abstract}
In this note, we study the long-time dynamics of passive scalars driven by rotationally symmetric flows. We focus on identifying precise conditions on the
velocity field in order to prove enhanced dissipation and Taylor dispersion in three-dimensional infinite pipes. As a byproduct of our analysis, we obtain
an enhanced decay for circular flows on a disc of arbitrary radius.
\end{abstract}

\maketitle


\section{Introduction}
\label{sec:intro}
This note considers the evolution of a passive scalar $f$ in a domain $\Omega\subset\mathbb{R}^d$, $d=2,3$, 
that is advected by an external velocity field $\bv:\Omega \to \mathbb{R}^d$ and is undergoing molecular diffusion. 
Our interest is to study quantitatively how the combined effect of diffusion and advection leads to faster time-scales of 
homogenization for $f$ compared to the case when only diffusion is present. The passive scalar  satisfies the  advection-diffusion equation
\begin{align}
  \label{eqn:advection-diffusion}
  \begin{cases}\pd{t}f+\bv\cdot\nabla f=\nu\Lapl f, \qquad \bx\in \Omega, \, t>0,\\
  f|_{t=0}=f^{in}, \qquad \partial_{\bn} f|_{\partial \Omega}=0.
\end{cases}
\end{align}
where $\nu>0$ is the diffusion coefficient and $\bn$ is the outward unit normal to $\de\Omega$. We are interested in the regime 
where $\nu \ll1 $, in which dissipative effects are observed on large time-scales of order $O(\nu^{-1})$. Since the average over 
the domain is conserved, we always assume that $\fint_{\Omega} f \itd{\bx}=0$. The domain $\Omega$ will either be a 
disc of radius $R>0$, denoted by $D$, or the infinite pipe $D\times \mathbb{R}$, and the velocity field is respectively 
\begin{equation}
\label{eq:defv}
    \bv= rv(r) \hat{\sfe}_{\theta}, \quad \text{in } D,\qquad \bv= v(r)\hat{\sfe}_{z}\quad \text{in } D\times\mathbb{R}.
\end{equation}
Here $\hat{\sfe}_{\theta}$ and $\hat{\sfe}_{z}$ are the unit vectors in the angular direction in $D$ and in the vertical direction in $D\times\mathbb{R}$, respectively. 
This situation and similar have been recently studied in \cites{FMN23,gallay-cz,CZD20}, in analogy with the case of passive scalars advected by shear flows \cites{BCZ17,ABN22}.

The purpose of this short note is twofold: one the one hand, we  identify precise conditions on the velocity field in the pipe 
setting that guarantee the \emph{enhanced dissipation} \cite{CKRZ} and \emph{Taylor dispersion} \cites{Ta1,Ta2,Ar} mechanisms. On the other hand, we derive 
a \textit{dissipation enhancement} result in the disc for a general class of radial velocity fields. In particular, we will assume the following 
for the profile $v(r)$ of the velocity field in \eqref{eq:defv}.
\begin{asmp}
  \label{asmp:nondegeneracy}
  The first $m$ derivatives of $v:\Interval\to\R$ do not vanish
  simultaneously; that is,
  \begin{equation}
    \sum_{n=1}^m\abs{v^{(n)}(r)}\ne 0
  \end{equation}
  for every $0\le r\le \Radius$.
\end{asmp}
We now state our main results, considering differently the pipe and the disc cases.

\subsection{Pipe parallel flows}
\label{subsec:Pipe}
When $\Omega=D\times \mathbb{R}$, the equation \eqref{eqn:advection-diffusion} can be written in cylindrical coordinates as
\begin{equation}
  \label{eqn:cylinder-physical}
  \begin{cases}
  \displaystyle\pd{t}f+v(r)\pd{z}f
  =
  \nu
  \left(
  \frac{1}{r}\pd{r}(r\pd{r})
  +
  \frac{1}{r}\pd{\theta}^2
  +
  \pd{z}^2
  \right)
  f,&(r,\theta,z)\in [0,R]\times \mathbb{T}\times \mathbb{R}, \\
    f\vert_{t=0}=\init{f}, \qquad  \partial_{\bn} f|_{\partial \Omega}=0.
  \end{cases}
\end{equation}
Taking the partial Fourier transform along the
axial coordinate $z$ on both sides of the equation, we see then that
for each $k\in\R$ the Fourier component
\begin{equation}
  \label{eqn:fhatk}
  \hat{f}_k(t,r,\theta)=\int_\R f(t,r,\theta,z)\e^{-ikz}\itd{z}
\end{equation}
satisfies the equation
\begin{equation}
  \label{eqn:cylinder-fourier-1}
  \pd{t}\hat{f}_k+ikv(r)\hat{f}_k
  =
  \nu
  \big( \Delta_{r,\theta}-
  k^2
  \big)
  \hat{f}_k,
\end{equation}
where we denote 
\begin{equation}
\label{def:Deltart}
    \Delta_{r,\theta}:=\frac{1}{r}\pd{r}(r\pd{r})
    +
    \frac{1}{r^2}\pd{\theta}^2.
\end{equation}
In particular, 
each Fourier mode $\hat{f}_k$ evolves independently from all the others,
so in the following, it suffices to consider the
equation~\eqref{eqn:cylinder-fourier-1} for a fixed parameter $k$. A further reduction can be made by considering the function
\begin{equation}
    g_k=\e^{\nu k^2t}\hat{f}_k
\end{equation} 
that satisfies
\begin{equation}
  \label{eqn:cylinder-cov}
  \pd{t}g_k+ikv(r)g_k=\nu\Lapl_{r,\theta}g_k.
\end{equation}
Notice that $g_k$ already incorporates the diffusion along the channel. Our first main result is the following.
\begin{theorem}
  \label{thm:main}
  Let $v:[0,R]\to \mathbb{R}$ satisfy 
  Assumption~\ref{asmp:nondegeneracy} and let $k\neq 0$. 
  Then, there exist constants $c_1,C_1>0$, independent of $\nu,k$, such that for all initial data 
  $g_k^{in}\in L^2(D)$ the solution to \eqref{eqn:cylinder-cov} satisfies 
  \begin{equation}
    \label{eqn:main}
    \norm{g_k(t)}_{L^2(D)}\le
    C_1 \e^{-c_1{\Lam}t} \norm{\init{g}_k}_{L^2(D)}
    \quad\text{where}\quad
    \Lam=
    \begin{cases}
      \nu^{\frac{m}{m+2}}\abs{k}^{\frac{2}{m+2}},&\text{if $0<\nu\le\abs{k}$,}\\
      \frac{k^2}{\nu},&\text{if $0<\abs{k}\le{\nu}$},
    \end{cases}
  \end{equation}
  for every $t\geq0$.
  For the solution to
  \eqref{eqn:cylinder-fourier-1} with initial data $g_k^{in}=\hat{f}^{in}_k$, we have the estimate
  \begin{equation}
    \label{eqn:cor}
    \|\hat{f}_k(t)\|_{\Lp2(\disc)}\le
    C_1 \e^{-(\nu k^2+c_1{\Lam})t}\|\init{\hat{f}_k}\|_{\Lp2(\disc)},
  \end{equation}
  for every $t\geq0$.
\end{theorem}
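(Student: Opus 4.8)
The plan is to derive the decay \eqref{eqn:main} from a \emph{uniform resolvent estimate} for the generator $\mathcal{L}_k:=ikv(r)-\Lapl_{r,\theta}\cdot\nu$ on $L^2(D)$ (with the Neumann condition), through a quantitative version of the Gearhart--Pr\"uss theorem. Since the solution of \eqref{eqn:cylinder-cov} is $g_k(t)=\e^{-t\mathcal{L}_k}\init{g}_k$ and $\Re\langle\mathcal{L}_kg,g\rangle=\nu\norm{\nabla g}_{L^2(D)}^2\ge0$ (the advection term is purely imaginary after integrating by parts, using the Neumann condition), it is enough to prove the pseudospectral bound
\begin{equation}
  \inf_{\lambda\in\R}\norm{(\mathcal{L}_k-i\lambda)g}_{L^2(D)}\ge c_0\Lam\norm{g}_{L^2(D)}
\end{equation}
with $c_0$ independent of $\nu,k$; the rate $\Lam$ then passes to the semigroup up to universal constants $C_1,c_1$. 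The companion estimate \eqref{eqn:cor} is immediate from \eqref{eqn:main} and the relation $g_k=\e^{\nu k^2t}\hat f_k$.

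The single input from Assumption~\ref{asmp:nondegeneracy} is a \emph{sublevel set estimate}. As the first $m$ derivatives of $v$ never vanish together on the compact interval $\Interval$, a compactness argument gives $\sum_{n=1}^m\abs{v^{(n)}(r)}\ge\kappa>0$ uniformly, and a standard van der Corput / covering argument upgrades this to
\begin{equation}
  \abs{\{r\in\Interval:\abs{v(r)-c}\le\delta\}}\le C\delta^{1/m}\qquad\text{for all }c\in\R,\ \delta>0.
\end{equation}
This quantifies the absence of flat pieces of $v$ and is exactly what injects the exponent $1/m$, hence $m/(m+2)$, into the final rate.

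The heart of the matter is the resolvent bound in the enhanced regime $\nu\le\abs{k}$. Writing $F=(\mathcal{L}_k-i\lambda)g$ and $c=\lambda/k$, so that $F=ik(v-c)g-\nu\Lapl_{r,\theta}g$, the real part of $\langle F,g\rangle$ yields the Dirichlet bound $\nu\norm{\nabla g}^2\le\norm{F}\norm{g}$, while the imaginary part yields $\abs{\int_D k(v-c)\abs{g}^2}\le\norm{F}\norm{g}$. I would then split $D$ along the resonant layer $\{\abs{v-c}\le\Lam/\abs{k}\}$, whose radial width is $\lesssim(\Lam/\abs{k})^{1/m}$ by the sublevel estimate. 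Off the layer one has $\abs{k(v-c)}\ge\Lam$, and testing the equation against a bounded multiplier built from the sign of $v-c$ turns the signed imaginary-part identity into genuine $L^2$ control of $g$ there; on the layer one applies a one-dimensional Poincar\'e / Gagliardo--Nirenberg inequality on a set of width $\delta=(\Lam/\abs{k})^{1/m}$ together with the gradient bound. Balancing the diffusive cost $\nu\delta^{-2}$ against $\Lam$ forces $\Lam^{(m+2)/m}\sim\nu\abs{k}^{2/m}$, i.e. $\Lam\sim\nu^{m/(m+2)}\abs{k}^{2/(m+2)}$, and closes the estimate. In the complementary regime $\abs{k}\le\nu$ the advection is a genuine perturbation of the self-adjoint operator $\nu(-\Lapl_{r,\theta})$: the zero Neumann eigenvalue (the constant mode) is no longer conserved once $k\ne0$, and a second-order perturbation computation shows its real part moves by $-c_1 k^2/\nu$, while all remaining modes decay at the faster rate comparable to $\nu$; this produces the Taylor-dispersion rate $k^2/\nu$ by a direct and elementary resolvent bound, with no need for the sublevel estimate.

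I expect the main obstacle to be the resolvent bound inside the resonant layer, where one must convert concentration of $\abs{g}^2$ on a thin radial set into an $L^2$ lower bound without losing the sharp power of $\nu$; this hinges on matching the cutoff width $\delta$ to the target rate $\Lam$ and on controlling the integration-by-parts remainders at the sign changes of $v-c$. Two structural points must also be handled with care: the coordinate singularity at $r=0$ together with the Neumann condition at $r=R$ in every integration by parts, and the uniformity of all constants in the angular frequency after decomposing $g$ into modes $\e^{in\theta}$ --- the extra nonnegative term $\nu n^2/r^2$ only strengthens the dissipation, so the worst case is $n=0$ and uniformity is not in danger.
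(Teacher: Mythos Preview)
Your plan for the enhanced regime $\nu\le|k|$ is essentially the paper's proof: quantitative Gearhart--Pr\"uss (Wei's theorem), the sublevel/covering estimate coming from Assumption~\ref{asmp:nondegeneracy}, the split of $D$ into a resonant annulus and its complement, the sign-multiplier $\chi$ to extract $L^2$ control off the layer, and a Poincar\'e-type bound on the thin layer. Your balancing $\nu\delta^{-2}\sim\Lam$ with $\delta=(\Lam/|k|)^{1/m}$ matches the paper's choice $\delta\sim(\nu/|k|)^{1/(m+2)}$.

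The genuine divergence is in the Taylor regime $|k|\le\nu$. The paper does \emph{not} switch to spectral perturbation theory; it keeps the very same inequality
\[
\norm{g}^2\le 4\left(\frac{1}{|k|\delta^m}+\frac{\nu}{|k|^2\delta^{2m+2}}+\frac{\tilde C\delta^2}{\nu}\right)\norm{H_\lambda g}\norm{g},
\]
factors out $\nu/k^2$, and simply takes $\delta$ of order one. Because $|k|/\nu\le1$, the bracket is then an absolute constant and the rate $k^2/\nu$ falls out immediately. This is cleaner and covers the crossover $|k|=\nu$ with no extra work; note in particular that it \emph{still uses} the sublevel estimate (through the covering lemma), contrary to your claim that it is unnecessary in this regime.

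Your proposed perturbation argument is heuristically sound---the second-order shift of the constant Neumann mode is indeed $-c\,k^2/\nu$---but turning this into a uniform pseudospectral lower bound $\norm{(\mathcal L_k-i\lambda)g}\ge c\,(k^2/\nu)\norm{g}$ for a non-normal operator is not quite ``direct and elementary'': you would need to control the resolvent on the whole imaginary axis (not just locate one eigenvalue), and to do so uniformly up to the boundary $|k|=\nu$ where the perturbation is no longer small relative to the spectral gap $\nu\mu_1$. It can be carried out, but the paper's route avoids all of this by reusing the same estimate with a different $\delta$.
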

The bounds in the theorem above are analogous to the ones obtained in \cite{gallay-cz} 
for multi-dimensional shear flows. In fact, they also proved the same result as in Theorem 
\ref{thm:main} for the velocity field $v(r)=1-r^m$ in the disc of radius $1$. However, a general 
condition analogous to that required in Assumption \ref{asmp:nondegeneracy} was not identified. The proof of Theorem \ref{thm:main}, given in Section \ref{sec:Resolvent},
 is inspired by the arguments in \cite{gallay-cz}. In particular, we obtain \eqref{eqn:main} as a consequence of a 
pseudospectral lower bound and the application of a result of Wei \cite{wei}*{Theorem 1.3}, see also Theorem \ref{thm:wei} below.

Having at hand the $k$ by $k$ estimate \eqref{eqn:main}, we are also able to quantify precisely the time-decay for the solution to the original problem \eqref{eqn:cylinder-physical}.
\begin{theorem}
  \label{thm:cylinder-physical}
  Let $f^{in}\in L^1_zL^2_{r,\theta}$, and let
  $v:\Interval\to\R$ satisfy   Assumption~\ref{asmp:nondegeneracy}. Then, there exist constants $c_2,C_2>0$, independent of 
  $\nu$, such that the solution to \eqref{eqn:cylinder-physical} satisfies
  \begin{equation}
    \label{eqn:cylinder-physical-estimate}
    \norm{f(t)}_{\LinftLdisc}
    \le
    C_2
    \left(
    \sqrt{\frac{\nu}{t}}
    +
    \frac{\e^{- c_2\nu t}}{t}
    \right)
    \norm{\init{f}}_{\LonetLdisc}.
  \end{equation}
for every $t\geq 0$.
\end{theorem}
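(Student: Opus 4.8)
The plan is to pass to the axial Fourier side, insert the mode-by-mode bound \eqref{eqn:cor}, and reduce the whole estimate to a single scalar integral in the frequency $k$. From the inversion formula $f(t,r,\theta,z)=\frac{1}{2\pi}\int_\R \hat f_k(t,r,\theta)\e^{ikz}\itd k$, taking the $\Lp2_{r,\theta}$ norm under the integral sign (Minkowski's inequality) produces a bound uniform in $z$, hence controlling the $\LinftLdisc$ norm: $\norm{f(t)}_{\LinftLdisc}\le\frac{1}{2\pi}\int_\R\norm{\hat f_k(t)}_{\Lp2(\disc)}\itd k$. Applying the same inequality to $\init{\hat f}_k(r,\theta)=\int_\R \init f(r,\theta,z)\e^{-ikz}\itd z$ gives the uniform-in-$k$ data bound $\norm{\init{\hat f}_k}_{\Lp2(\disc)}\le\norm{\init f}_{\LonetLdisc}$. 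Combining these with \eqref{eqn:cor} yields
\[
\norm{f(t)}_{\LinftLdisc}\le\frac{C_1}{2\pi}\norm{\init f}_{\LonetLdisc}\int_\R \e^{-(\nu k^2+c_1\Lam)t}\itd k,
\]
so it remains to bound the scalar integral $J(t):=\int_\R\e^{-(\nu k^2+c_1\Lam)t}\itd k$ by $\sqrt{\nu/t}+t^{-1}\e^{-c_2\nu t}$, with constants independent of $\nu$.

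I would split $J$ according to the two regimes of $\Lam$ in \eqref{eqn:main}, i.e.\ at $\abs k=\nu$. On the low-frequency part $\abs k\le\nu$ one is in the Taylor dispersion regime $\Lam=k^2/\nu$, where the decay rate is at least $c_1 k^2/\nu$; extending the integral to all of $\R$ gives $\int_{\abs k\le\nu}\e^{-(\nu k^2+c_1 k^2/\nu)t}\itd k\le\int_\R \e^{-c_1 k^2 t/\nu}\itd k=\sqrt{\pi\nu/(c_1 t)}$, which is precisely the first term $\sqrt{\nu/t}$. This is the analytic signature of the enhanced effective axial diffusivity of size $\sim\nu^{-1}$ responsible for Taylor dispersion.

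The high-frequency part $\abs k>\nu$, where $\Lam=\nu^{m/(m+2)}\abs k^{2/(m+2)}$, is the crux. Since this expression is increasing in $\abs k$ and equals $\nu$ at $\abs k=\nu$, the total rate obeys $\nu k^2+c_1\Lam\ge c_1\nu$ on the whole range, so one may factor out $\e^{-c_2\nu t}$ (any $c_2\le c_1$) and is left to show $\int_{\abs k>\nu}\e^{-(\nu k^2+c_1\Lam-c_2\nu)t}\itd k=O(1/t)$. The mechanism producing $1/t$ is that the rate leaves $\abs k=\nu$ with an $O(1)$ slope — one checks $\frac{\dd}{\dd k}\big(\nu^{m/(m+2)}k^{2/(m+2)}\big)\big|_{k=\nu}=\tfrac{2}{m+2}$ — so for large $t$ the integral concentrates in a window of width $\sim1/t$ about the endpoint and contributes $\sim1/t$, while the molecular-diffusion factor $\e^{-\nu k^2 t}$ secures convergence at large $\abs k$. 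I expect this high-frequency estimate to be the main obstacle: it is exactly where the two dissipation mechanisms — plain diffusion $\e^{-\nu k^2 t}$ and enhanced dissipation $\e^{-c_1\Lam t}$ — compete, and one must track the crossover between the two regimes of $\Lam$ while keeping every constant independent of $\nu$ (and of $m$). Once $J(t)\lesssim\sqrt{\nu/t}+t^{-1}\e^{-c_2\nu t}$ is established, \eqref{eqn:cylinder-physical-estimate} follows with a constant $C_2$ depending only on $C_1$, $c_1$ and $m$.
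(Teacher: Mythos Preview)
Your proposal is correct and follows the same route as the paper: Fourier in $z$, the mode-by-mode bound from Theorem~\ref{thm:main}, reduction to a scalar integral in $k$, and the split at $|k|=\nu$ with the low-frequency Gaussian producing $\sqrt{\nu/t}$. For the high-frequency piece the paper actually drops the $\nu k^2$ factor (using \eqref{eqn:main} rather than \eqref{eqn:cor}), peels off $\e^{-\frac12 c_1\nu t}$, and obtains the $O(1/t)$ bound through the explicit change of variables $\eta\propto(\nu^{m/(m+2)}t)^{(m+2)/2}k$ followed by a Riemann-sum comparison, rather than your endpoint-slope heuristic; note also that the constants are allowed to depend on $m$, so your parenthetical ``and of $m$'' is not needed.
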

With  \eqref{eqn:cylinder-physical-estimate} we have a precise quantification of the Taylor dispersion mechanism for the problem at hand, see also \cite{bedrossian2022taylor} where these types of bounds were obtained in another context. The polynomial decay rate is analogous to the standard heat equation with diffusivity coefficient $\nu^{-1}$. In particular, the presence of the advection allows us to prove the polynomial decay on a time-scale  $O(1)$, which is much faster than $O(\nu^{-1})$ we would get without advection. 

The choice of the norms on which  to quantify  the decay is rather natural from the available $k$ by $k$ bounds. From a physical point of view,  the flow is stretching the concentration towards spatial infinity in the $z$-direction. Combining this with enough integrability in $z$,   the stretching generated by the flow makes the concentration intersect smaller sets in the discs orthogonal to $z$, so that a decay can be effectively quantified even if the diffusion is still not efficient on a time-scale of order $O(1)$. Notice that the order of the critical points of $v$ does not enter at all in the physical space estimates, contrary to the disc setting, as we show below.
\subsection{Circular flows in a disc}
\label{subsec:circular}
When we consider the equation \eqref{eqn:advection-diffusion} with $\Omega=D$ and $\bv=rv(r)\hat{\sfe}_\theta$, the problem we have at hand is
\begin{equation}
  \label{eqn:disc-physical}
  \begin{cases}
    \pd{t}f+v(r)\pd{\theta}f=\nu\Lapl_{r,\theta}f, \qquad (r,\theta)\in [0,R]\times \mathbb{T},\\
    f\vert_{t=0}=\init{f}, \qquad \partial_{\bn}f|_{\partial \Omega}=0,
  \end{cases}
\end{equation}
where we recall that $\Delta_{r,\theta}$ is defined in \eqref{def:Deltart}.
If we now take a partial Fourier transform in the angular direction, namely 
\begin{equation}
  \label{eqn:f-hat-def}
  \hat{f}_{\ell}=\frac{1}{2\pi}\int_\circlegrp f(t,r,\theta)\e^{-i\ell\theta}\itd{\theta}
\end{equation}
the Fourier coefficients
$\hat{f}_\ell$ of a solution $f$ to equation \eqref{eqn:disc-physical} satisfy%
\begin{equation}
  \label{eqn:disc-fourier}
  \pd{t}\hat{f}_\ell+i\ell v(r)\hat{f}_\ell=\nu\left(\frac{1}{r}\partial_r(r\partial r)-\frac{\ell^2}{r^2}\right)\hat{f}_\ell
\end{equation}
for each $\ell\in\Z$. The analogy with \eqref{eqn:cylinder-cov} is the following: if we take the angular Fourier transform in \eqref{eqn:cylinder-cov}, 
we get 
\begin{equation}
  \label{eqn:cylinder-fourier-2}
  \pd{t}\hat{g}_{k,\ell}
  +
  ik v(r)\hat{g}_{k,\ell}
  =
  \nu\left(
  \frac{1}{r}\pd{r}(r\pd{r})
  -\frac{\ell^2}{r^2}\right)\hat{g}_{k,\ell}.
\end{equation}
Hence, 
\eqref{eqn:disc-fourier} is the equation
\eqref{eqn:cylinder-fourier-2} for the choice of parameters
$\ell=k$.  We can therefore recover the bounds on $\hat{f}_\ell$ from the ones we have for $g_{\ell,\ell}$  in Theorem \ref{thm:main}. Observing that $|\ell|\geq 1>\nu$, we obtain the following.
\begin{corollary}
  \label{cor:disc-fourier}
  Let $v:[0,R]\to \mathbb{R}$ satisfy Assumption~\ref{asmp:nondegeneracy} and $\ell\neq 0$. Then, there exist constants $c_3,C_3>0$, independent of $\nu,\ell$, such that for all initial data $\hat{f}_{\ell}^{in}\in L^2(0,R)$  the solution to   \eqref{eqn:disc-fourier} satisfy
  \begin{equation}
    \|\hat{f}_\ell(t)\|_{\Lp2(0,R)}\le C_3 \e^{-c_3\nu^{\frac{m}{m+2}}\abs{\ell}^{\frac{2}{m+2}}t}\|\init{\hat{f}}_\ell\|_{\Lp2(0,R)},
  \end{equation}
 for every $t\geq 0$. 
\end{corollary}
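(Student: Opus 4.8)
The plan is to deduce Corollary~\ref{cor:disc-fourier} directly from Theorem~\ref{thm:main}, exploiting the algebraic coincidence already recorded above: equation~\eqref{eqn:disc-fourier} for $\hat f_\ell$ is exactly equation~\eqref{eqn:cylinder-fourier-2} for the single angular mode $\hat g_{k,\ell}$ under the diagonal choice $k=\ell$. Thus I would not re-run any resolvent or pseudospectral estimate; instead I would transfer the $L^2(D)$ bound~\eqref{eqn:main} for $g_k$ down to a fixed angular Fourier mode, and then specialize the rate $\Lam$ to the regime at hand.

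First I would observe that the flow generated by~\eqref{eqn:cylinder-cov} preserves angular Fourier modes, since the operator $\nu\Delta_{r,\theta}-ikv(r)$ has $r$-dependent coefficients only and hence commutes with the angular projection $f\mapsto\hat f_\ell$. Consequently, applying Theorem~\ref{thm:main} to initial data $g_k^{in}$ supported on a single mode $\ell$, the solution $g_k(t)$ remains supported on that mode, and Parseval's identity gives $\norm{g_k(t)}_{L^2(D)}=\sqrt{2\pi}\,\norm{\hat g_{k,\ell}(t)}_{L^2((0,R),\,r\,\dd r)}$. Since the $\sqrt{2\pi}$ factors cancel between the two sides of~\eqref{eqn:main}, the estimate descends to the radial profile,
\begin{equation}
  \norm{\hat g_{k,\ell}(t)}_{L^2}\le C_1\e^{-c_1\Lam t}\norm{\hat g_{k,\ell}^{in}}_{L^2},
\end{equation}
with $C_1,c_1$ the constants from Theorem~\ref{thm:main}, still independent of $\nu,k,\ell$.

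Next I would set $k=\ell$ and use the identification above to replace $\hat g_{\ell,\ell}$ by $\hat f_\ell$ (the two solve the same linear initial value problem). It then remains only to select the correct branch of $\Lam$ in~\eqref{eqn:main}: because $\ell\in\Z\setminus\{0\}$ forces $\abs{\ell}\ge1$ and we work with $\nu<1$, we have $0<\nu\le\abs{\ell}=\abs{k}$, so the first case applies and $\Lam=\nu^{\frac{m}{m+2}}\abs{\ell}^{\frac{2}{m+2}}$. Substituting this into the transferred estimate gives precisely the claimed bound with $c_3=c_1$ and $C_3=C_1$, which inherit the independence of $\nu,\ell$.

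I do not expect a genuine obstacle, as all the analytic content lives in Theorem~\ref{thm:main}; the only point needing care is the mode-by-mode reduction, which rests on the commutation of the semigroup with the angular projections together with Parseval's identity for the weighted measure $r\,\dd r$. A minor caveat is the convention behind the norm $\norm{\cdot}_{L^2(0,R)}$ in the statement: whether one reads it with $r\,\dd r$ or with $\dd r$ is immaterial to the exponential rate, since the single-mode restriction preserves either choice, so beyond fixing the convention no extra argument is required.
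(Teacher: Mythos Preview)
Your argument is correct and follows essentially the same route as the paper: identify \eqref{eqn:disc-fourier} with \eqref{eqn:cylinder-fourier-2} at $k=\ell$, invoke Theorem~\ref{thm:main}, and use $|\ell|\ge 1>\nu$ to land in the enhanced-dissipation branch of $\Lam$. The paper leaves the mode-preservation/Parseval step implicit, but otherwise the reasoning is identical.
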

%
The bound in the physical space for $f$ now it directly follows by Parseval's identity. Namely, if 
\begin{equation}
    \int_{\mathbb{T}} f^{in}(r,\theta) \itd{\theta}=0,
\end{equation}
we obtain that
\begin{equation}
  \label{eqn:parseval}
  \|f(t)\|_{\Lp2(D)}\leq C_4 \e^{-c_3\nu^{\frac{m}{m+2}}t}
    \|{\init{f}}\|_{\Lp2(D)},
\end{equation}
for a suitable constant $C_4>0$. Therefore we capture the enhanced dissipation mechanism, telling us that the solution is decaying on a time-scale $O(\nu^{-\frac{m}{m+2}})$, which is always faster than $O(\nu^{-1})$. When the angular average, corresponding to $\hat{f}_0$, is not zero, we would obtain that $f$ is converging towards its angular average on the fast time-scale. Notice that $\hat{f}_0$ is not conserved but it satisfies a standard $1d$ heat equation, therefore we cannot expect to have decay on a faster time-scale for it.  

\section{Semigroup decay via resolvent estimates}
\label{sec:Resolvent}

The main tool we will employ in the proof of Theorem \ref{thm:main} is a quantitative version of the Gearhart-Pr\"uss obtained by Wei in \cite[Theorem 1.3]{wei} (see also \cite{helffer2010resolvent}), which we reproduce below for the reader's convenience.
\begin{theorem}
  \label{thm:wei}
  Let $X$ be a Hilbert space and $H:\cD(H)\to X$ be an $m$-accretive operator on $X$. Then
  \begin{equation}
    \label{eqn:wei}
    \norm{\e^{-tH}}_{X\to X}\le \e^{-t\Psi(H)+\pi/2}
  \end{equation}
  in which the quantity $\Psi(H)$ is the pseudospectral abscissa of $H$, defined as
  \begin{equation}
    \Psi(H)=\inf \{\norm{(H-z)f}_X\mid z\in i\R, f\in \cD(H), \norm{f}_X=1\}.
  \end{equation}
\end{theorem}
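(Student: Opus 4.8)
The plan is to prove the quantitative Gearhart--Pr\"uss bound \eqref{eqn:wei} by combining the contraction property of the semigroup with a resolvent estimate on the imaginary axis extracted from accretivity. First I would record two elementary reductions. Since $H$ is $m$-accretive, $-H$ generates a contraction semigroup, so $\norm{\e^{-tH}}_{X\to X}\le 1$ for all $t\ge0$; in particular, if $\Psi:=\Psi(H)=0$ the claim is trivial, and when $\Psi>0$ the bound \eqref{eqn:wei} is automatic on the range $0\le t\le \pi/(2\Psi)$, because there $-t\Psi+\pi/2\ge0$ and hence $\e^{-t\Psi+\pi/2}\ge1\ge\norm{\e^{-tH}}_{X\to X}$. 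It therefore suffices to treat $\Psi>0$ and $t>\pi/(2\Psi)$, where genuine decay must be produced.

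The key analytic input is a resolvent bound. Accretivity gives $\Re\langle Hf,f\rangle\ge0$, and for $z=a+i\tau$ with $a\le0$ one computes
\[
\norm{(H-z)f}^2=\norm{(H-i\tau)f}^2+2\abs{a}\,\Re\langle (H-i\tau)f,f\rangle+a^2\norm{f}^2\ge(\Psi^2+a^2)\norm{f}^2,
\]
using the defining inequality $\norm{(H-i\tau)f}\ge\Psi\norm{f}$ together with $\Re\langle(H-i\tau)f,f\rangle=\Re\langle Hf,f\rangle\ge0$. Since $m$-accretivity places $\sigma(H)$ in $\{\Re z\ge0\}$, makes $\{\Re z<0\}$ part of the resolvent set, and upgrades the lower bound just derived to genuine invertibility on the imaginary axis, this yields the analytic, quantitative estimate $\norm{(H-z)\inv}_{X\to X}\le(\Psi^2+a^2)^{-1/2}$ throughout $\{\Re z\le0\}$; in particular $\norm{(H-i\tau)\inv}_{X\to X}\le\Psi\inv$ on the whole imaginary axis, and $0\in\rho(H)$.

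With this in hand I would represent the semigroup through the inverse Laplace transform. For $f\in\cD(H^2)$ (a dense core) and $c>0$ one has the Bromwich representation $\e^{-tH}f=\frac{1}{2\pi i}\int_{\Re\lambda=c}\e^{\lambda t}(\lambda+H)\inv f\,\dd\lambda$, and the resolvent identity lets one rewrite $(\lambda+H)\inv f=\lambda\inv f-\lambda^{-2}Hf+\lambda^{-2}(\lambda+H)\inv H^2 f$ to obtain an absolutely convergent integrand. Shifting the contour into $\{\Re\lambda<0\}$ (no singularities are crossed, since $0\in\rho(H)$ and the resolvent is analytic in $-\Psi<\Re\lambda$) produces the exponential gain $\e^{-\abs{\Re\lambda}t}$, while the resolvent bound controls the remaining kernel. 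The constant is exactly the value $\int_0^\infty\frac{\Psi\,\dd\tau}{\Psi^2+\tau^2}=\frac{\pi}{2}$, which is what I expect to appear after optimizing the location of the contour and balancing the resolvent-controlled portion against the part absorbed by the contraction bound $\norm{\e^{-tH}}_{X\to X}\le1$; a density argument then removes the restriction $f\in\cD(H^2)$.

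I expect the main obstacle to be precisely this last step. The abstract resolvent $(H-i\tau)\inv$ enjoys \emph{no} decay as $\abs{\tau}\to\infty$, so the Bromwich integral along a vertical line is only conditionally convergent and a naive $\int\norm{\cdot}\,\dd\tau$ estimate diverges logarithmically. Overcoming this requires regularizing through $\cD(H^2)$ to gain two powers of $\lambda$, then splitting the frequency integral so that the high-frequency (equivalently, short-time) contribution is handed to the contraction estimate and only the convergent piece—whose weight integrates to $\pi/2$—is retained, all while tracking the constants sharply enough to land exactly on $\e^{-t\Psi+\pi/2}$ rather than on a lossy $C\e^{-t\Psi}$.
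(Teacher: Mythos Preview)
The paper does not prove this theorem; it is quoted verbatim from \cite{wei}*{Theorem~1.3} (with a pointer to \cite{helffer2010resolvent}) and used as a black box, so there is no argument in the paper to compare your attempt against.

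For context, Wei's own proof is not a contour-integral argument at all but a short time-domain computation: writing $g(t)=\e^{-tH}f$, one optimizes the pseudospectral lower bound $\norm{(H-i\tau)g}\ge\Psi\norm{g}$ over $\tau$ to obtain a differential inequality for the logarithmic derivative of $\norm{g(t)}$, and the constant $\pi/2$ appears as $\int_0^\infty(1+s^2)^{-1}\,\dd s$ upon integrating that inequality. Your Bromwich/contour-shift strategy is a genuinely different route.

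Your sketch has a concrete gap precisely where you flag it. The resolvent bound $\norm{(H-z)^{-1}}\le(\Psi^2+a^2)^{-1/2}$ you derive holds only for $a=\Re z\le 0$, which in the Bromwich variable $\lambda=-z$ corresponds to $\Re\lambda\ge 0$, the half-plane with \emph{no} exponential gain. After shifting to $\Re\lambda=-a<0$ you are at $\Re z=a>0$, where the only bound available is the Neumann-series estimate $\norm{(H-z)^{-1}}\le(\Psi-a)^{-1}$, uniform in $\tau=\Im z$ and diverging as $a\uparrow\Psi$. The integral $\int_0^\infty\Psi(\Psi^2+\tau^2)^{-1}\,\dd\tau=\pi/2$ you invoke therefore has no clear provenance in your setup: on the shifted contour the resolvent bound carries no $\tau$-decay, while on the imaginary axis there is no exponential gain in $t$. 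A contour argument along these lines can deliver \emph{some} bound $C\e^{-t\Psi}$ after optimizing in $a$, but nothing in your outline selects $C=\e^{\pi/2}$; producing that sharp constant by Laplace-inversion methods would require a substantially more delicate construction than a vertical contour shift.
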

We rewrite the equation \eqref{eqn:cylinder-cov} as
\begin{equation}
  \pd{t}g_k+Hg_k=0,
\end{equation}
where the operator $H:\cD(H)\to\Lp2(\disc)$ is defined as
\begin{equation}
\label{def:H}
  H=-\nu\Lapl_{r,\theta}+ikv(r), \qquad \cD(H)= H^2(D)
\end{equation}
which is indeed $m$-accretive \cite{FMN23}.
Thus, by the Lumer-Phillips theorem, the
unique mild solution to equation \eqref{eqn:cylinder-cov} is given by
a strongly continuous semigroup in $\Lp2(\disc)$, namely, for the
initial datum $\init{g_k}\in\Lp2(\disc)$,
\begin{equation}
  \label{eqn:semigroup-soln}
  g_k(t)=\e^{-tH}\init{g_k}
\end{equation}
solves equation \eqref{eqn:cylinder-cov}. Moreover, by Theorem
\ref{thm:wei}, the operator $\e^{-tH}$ satisfies the estimate
\eqref{eqn:wei}. Hence, the proof of Theorem \ref{thm:main} is reduced in proving a pseudospectral bound for the operator $H$ defined in \eqref{def:H}.

\subsection{Pseudospectral bounds}
\label{subsec:pseudo}
Being $k$ a fixed parameter from now on, let us write
\begin{equation}
  \label{eqn:Hlam-def}
  \Hlam=H-ik\lambda=-\nu\Lapl_{r,\theta}+ik(v(r)-\lambda).
\end{equation}
To prove Theorem~\ref{thm:main}, it suffices to show that
\begin{equation}
  \label{eqn:Hlamg}
  \norm{\Hlam g}_{\Lp2(\disc)}\ge c_1\Lam\norm{g}_{\Lp2(\disc)},
\end{equation}
for every $g\in \cD(H)$, where the constant $c_1$ needs to be chosen
uniformly in $\lambda\in\R$. 

 In the computations that follow, we frequently omit the subscripts on
the notation for norms and inner products in $L^2(D)$, where no ambiguity can occur
as to the relevant function space.  

The strategy for proving \eqref{eqn:Hlamg} is as follows: we choose,
for each $\lambda\in\R$, a neighbourhood
$\nhdEE[_\lambda]\subseteq\Interval$ of the level set
$E_\lambda=v\inv(\lambda)$, and split the domain of integration
\begin{equation}
  \label{eqn:split}
  \norm{g}^2=
  \int_{\abs{x}\in\Interval\setminus\nhdEE[_\lambda]}\abs{g}^2\itd{x}
  +
  \int_{\abs{x}\in\nhdEE[_\lambda]}\abs{g}^2\itd{x}.
\end{equation}
in order to get upper bounds for each of the two integrals on the
right-hand side. The motivation behind this is that, away from the
annulus $\{x\in\disc:\abs{x}\in\nhdEE[_\lambda]\}$, the convection term $v-\lambda$ in \eqref{eqn:Hlam-def} is
bounded away from zero, which allows us to recover a bound on the $L^2$ norm in terms of $H_{\lambda}$ thanks to the invertibility of $v-\lambda$. On the other hand, in the integral over the
region where $\abs{x}\in\nhdEE[_\lambda]$, we  exploit some Poincaré-type  inequality where we can gain smallness parameters from the measure of the set $\nhdEE[_\lambda]$. That the latter set is indeed small is consequence of Assumption~
\ref{asmp:nondegeneracy}.
We thus choose the sets $\nhdEE[_\lambda]$ as follows.
\begin{definition}
\label{defsets}
  Let $m\in\N$ be the one in Assumption~\ref{asmp:nondegeneracy}. Define
  \begin{itemize}
  \item
    $\nhdE$ to be the preimage under $v$ of the interval
    $(\lambda-\delta^m,\lambda+\delta^m)$, that is,
    $\nhdE=\{r\in\Interval\mid \abs{v(r)-\lambda}<\delta^m\}$.
  \item
    $\nhdEE$ to be the neighbourhood of the set $\nhdE$ with
    thickness $\delta^m$, that is, $\nhdEE=\{r\in\Interval\mid
    \dist(x, \nhdE)<\delta^m\}$.
  \end{itemize}
\end{definition}
We collect in the next two propositions the bounds we have for the two integrals on the right-hand side of   \eqref{eqn:split}. Away from the level sets we have the following result.
\begin{proposition}
  \label{prop:away}
Let $\mathcal{E}_{\lambda,\delta}$ be the set defined in Definition \ref{defsets}. Then, for any $g\in \cD(H)$ the following holds true
\begin{equation}
\label{bd:away}
\int_{|x|\in [0,R]\setminus\mathcal{E}_{\lambda,\delta}}|g|^2 \itd{x}\leq \frac14 \|g\|^2+\left( \frac{1}{|k|\delta^m}+\frac{\nu}{|k|^2\delta^{2m+2}}\right)\|H_\lambda g\|\|g\|.
\end{equation}
\end{proposition}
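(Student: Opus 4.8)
The plan is to localise the integral to the region where $v-\lambda$ is invertible and to test the equation against a weight built out of $(v-\lambda)^{-1}$, exactly as anticipated in the discussion following \eqref{eqn:split}. First I would fix a smooth cut-off $\chi=\chi_{\lambda,\delta}:\Interval\to[0,1]$ with $\chi\equiv 1$ on $\Interval\setminus\nhdEE$, with $\chi\equiv 0$ on $\nhdE$, and with $\abs{\chi'}\lesssim\delta^{-m}$; the last bound is legitimate because, by Definition~\ref{defsets}, $\chi$ only has to interpolate across the shell $\nhdEE\setminus\nhdE$, whose radial thickness is $\delta^m$. Note that on $\spt\chi\subseteq\Interval\setminus\nhdE$ one has $\abs{v-\lambda}\ge\delta^m$, so $(v-\lambda)^{-1}$ is well defined and bounded by $\delta^{-m}$ there; since $\chi\equiv 1$ on the away region and $\chi^2\ge 0$, it suffices to bound $\int\chi^2\abs{g}^2$.

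Next I would pair $H_\lambda g$ with the weighted test function $\phi=\frac{\chi^2 g}{ik(v-\lambda)}\in\cD(H)$. Using the splitting $H_\lambda=-\nu\Lapl_{r,\theta}+ik(v-\lambda)$ of \eqref{eqn:Hlam-def}, the multiplication part of $H_\lambda$ reproduces exactly $-\int\chi^2\abs{g}^2$, so that after an integration by parts (with no boundary term, thanks to the homogeneous Neumann condition satisfied by $g$ in \eqref{def:H}) and taking real parts one arrives at the exact identity $\int\chi^2\abs{g}^2=-\Re\langle H_\lambda g,\phi\rangle+\nu\,\Re\langle\nabla g,\nabla\phi\rangle$. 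The first term is immediate: by Cauchy--Schwarz and $\tfrac{\chi^2}{\abs{v-\lambda}}\le\delta^{-m}$ on $\spt\chi$ one gets $\norm{\phi}\le\tfrac{1}{\abs{k}\delta^m}\norm{g}$, hence $\abs{\Re\langle H_\lambda g,\phi\rangle}\le\tfrac{1}{\abs{k}\delta^m}\norm{H_\lambda g}\norm{g}$, which is precisely the first contribution in \eqref{bd:away}.

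The second term is where the work lies. Expanding $\nabla\phi$, the ``diagonal'' piece proportional to $\frac{1}{ik}\int\frac{\chi^2}{v-\lambda}\abs{\nabla g}^2$ is purely imaginary (the weight $\frac{\chi^2}{v-\lambda}$ is real), hence drops out under $\Re$; what survives is a commutator of the form $\frac{\nu}{k}\Im\int \bar g\,\partial_r\!\big(\tfrac{\chi^2}{v-\lambda}\big)\,\partial_r g$. To close the estimate I would control $\norm{\nabla g}$ through the accretivity/energy identity $\nu\norm{\nabla g}^2=\Re\langle H_\lambda g,g\rangle\le\norm{H_\lambda g}\norm{g}$, and then use Young's inequality to split off $\tfrac14\norm{g}^2$ while leaving the remaining $\frac{\nu}{\abs{k}^2\delta^{2m+2}}\norm{H_\lambda g}\norm{g}$, which is the second contribution in \eqref{bd:away}.

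The main obstacle is precisely this commutator. The crude pointwise bounds $\abs{\chi'}\lesssim\delta^{-m}$ and $\abs{v-\lambda}\ge\delta^m$ by themselves only give $\norm{\partial_r(\chi^2/(v-\lambda))}_{\Lp\infty}\lesssim\delta^{-2m}$, which after Young's inequality produces a strictly weaker power of $\delta$ than the one claimed; obtaining the sharp exponent $2m+2$ in \eqref{bd:away} requires the full force of Assumption~\ref{asmp:nondegeneracy}, namely that the derivative of the weight is supported on the thin shell $\nhdEE\setminus\nhdE$ and that there the quantitative nondegeneracy constrains $\abs{v'}$ in terms of $\abs{v-\lambda}$. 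I therefore expect the decisive step to be a measure-sensitive (rather than $\Lp\infty$) estimate of the commutator that exploits the smallness of $\nhdEE\setminus\nhdE$ furnished by the nondegeneracy hypothesis, rather than a bound on the supremum of the weight's derivative.
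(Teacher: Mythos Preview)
Your approach has a genuine gap, and the proposed remedy does not close it. The problematic term in your commutator is the piece
\[
\frac{\chi^2\,v'}{(v-\lambda)^2},
\]
coming from differentiating the factor $(v-\lambda)^{-1}$. Unlike the $\chi\chi'$ contribution, this term is \emph{not} supported on the thin shell $\nhdEE\setminus\nhdE$: it lives on all of $\spt\chi=[0,R]\setminus\nhdE$, where $|v-\lambda|$ can be as small as $\delta^m$. Near a regular point of $v-\lambda$ (say $v'(r_0)\neq0$, which Assumption~\ref{asmp:nondegeneracy} certainly allows), one has $|v'|/|v-\lambda|^2\sim\delta^{-2m}$ on an interval that is not small in any useful sense, so neither the nondegeneracy hypothesis nor a measure argument brings the $L^\infty$ bound below $\delta^{-2m}$. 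Your route therefore yields at best $\nu|k|^{-2}\delta^{-4m}$ in place of $\nu|k|^{-2}\delta^{-(2m+2)}$, which is strictly too weak for $m\ge2$ and would spoil the optimisation in $\delta$ carried out in the proof of Theorem~\ref{thm:main}.

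The paper's argument sidesteps this entirely by never dividing by $v-\lambda$. Instead of a $[0,1]$-valued cutoff, one takes a \emph{signed} weight $\chi$ with $\chi\,\sign(v-\lambda)\ge0$ everywhere and $|\chi|=1$ on $[0,R]\setminus\nhdEE$, so that $\chi(v-\lambda)\ge\delta^m$ there and the left-hand side of \eqref{bd:away} is bounded by $\delta^{-m}\langle(v-\lambda)\chi g,g\rangle$. One then tests $H_\lambda g$ against $\chi g$ and takes the \emph{imaginary} part: the multiplication operator contributes exactly $k\langle(v-\lambda)\chi g,g\rangle$, while the Laplacian produces only $-\nu\,\Im\langle\partial_r g,(\partial_r\chi)g\rangle$, a commutator involving $\chi'$ alone and no inverse powers of $v-\lambda$. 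Bounding $|\chi'|$ pointwise and using $\nu\|\nabla g\|^2\le\|H_\lambda g\|\|g\|$ followed by Young's inequality gives \eqref{bd:away} directly. Note in particular that Assumption~\ref{asmp:nondegeneracy} plays no role here; it enters only in Proposition~\ref{prop:near}, via the covering Lemma~\ref{lem:covering}.
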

Near the level sets, we can prove the result below.
\begin{proposition}
  \label{prop:near}
Let $\mathcal{E}_{\lambda,\delta}$ be the set defined in Definition \ref{defsets}. Then there exists a constant $\tilde{C}>0$ such that, for any $g\in \cD(H)$,  the following holds true
\begin{equation}
\label{bd:near}
\int_{|x|\in\mathcal{E}_{\lambda,\delta}}|g|^2 \itd{x}\leq \frac12 \|g\|^2+\frac{\tilde{C}\delta^2}{\nu}\|H_\lambda g\|\|g\|.
\end{equation}
\end{proposition}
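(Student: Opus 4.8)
The plan is to reduce \eqref{bd:near} to an elementary Poincaré--type inequality in the radial variable, whose gain comes from two facts: the radial width of $\nhdEE$ is $O(\delta)$, and the radial derivative of $g$ is controlled by $\Hlam$. I would first record the dissipation bound. Since $ik(v(r)-\lambda)$ enters only the imaginary part of the quadratic form, an integration by parts using the Neumann condition gives
\begin{equation}
\nu\norm{\nabla g}^2=\Re\langle \Hlam g,g\rangle\le\norm{\Hlam g}\norm{g},
\end{equation}
and in particular $\norm{\pd{r}g}^2\le\norm{\nabla g}^2\le\nu^{-1}\norm{\Hlam g}\norm{g}$. This is the only way $\Hlam$ enters the proof, and it already accounts for the factor $\nu^{-1}\norm{\Hlam g}\norm{g}$ in \eqref{bd:near}; what remains is to extract the prefactor $\delta^2$ from the geometry of $\nhdEE$.

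The geometric input I need is that $\nhdEE$ consists of at most $N_0$ intervals, uniformly in $\lambda$ and $\delta$, each of length $\lesssim\delta$. The bound on the number of components follows because Assumption~\ref{asmp:nondegeneracy} forces $v$ to have finitely many (hence uniformly many) intervals of monotonicity. The bound on the length of each component is a sublevel--set estimate: writing $f=v-\lambda$, whose derivatives $f^{(n)}=v^{(n)}$ ($n\ge1$) are independent of $\lambda$, the hypothesis $\sum_{n=1}^m\abs{v^{(n)}}\ne 0$ together with the compactness of $\Interval$ yields a uniform lower bound $\max_{1\le n\le m}\abs{v^{(n)}(r)}\ge c_0>0$, and hence $\abs{\{\,\abs{v-\lambda}<\delta^m\}}\lesssim(\delta^m)^{1/m}=\delta$ by the classical sublevel--set estimate; enlarging by the thickness $\delta^m\le\delta$ does not change this.

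The Poincaré step is then carried out component by component. Fix $\theta$ and let $I$ be a component of $\nhdEE$ lying at distance $\gtrsim\delta$ from the origin; choose an adjacent radial slab $J\subset\Interval$ with $\abs{J}\ge 4\abs{I}$ and put $\tilde J=I\cup J$. From $g(r,\theta)-g(s,\theta)=\int_s^r\pd{r}g$ and Cauchy--Schwarz one gets $\abs{g(r,\theta)}^2\le 2\abs{g(s,\theta)}^2+2\abs{\tilde J}\int_{\tilde J}\abs{\pd{r}g}^2\,\dd r'$; averaging over $s\in J$ and integrating over $r\in I$ against $r\,\dd r$ gives
\begin{equation}
\int_I\abs{g}^2\,r\,\dd r\le\tfrac12\int_J\abs{g}^2\,r\,\dd r+C\abs{I}\,\abs{\tilde J}\int_{\tilde J}\abs{\pd{r}g}^2\,r\,\dd r,
\end{equation}
where the weight $r$ has been frozen up to a bounded factor since $r$ varies by $O(\delta)$ across $\tilde J$ while staying $\gtrsim\delta$. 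Because $\abs{I},\abs{\tilde J}\lesssim\delta$, the gradient prefactor is $\lesssim\delta^2$. Summing over the boundedly many, finitely overlapping components, integrating in $\theta$, and invoking the dissipation bound produces \eqref{bd:near}.

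The one genuinely separate point---and the place I expect the main difficulty---is the component meeting the origin, where the radial weight degenerates and the slicing above loses the $\delta^2$ gain. There I would instead compare the concentric discs $\{\abs{x}<\rho\}\subset\{\abs{x}<2\rho\}$ with $\rho\sim\delta$ and apply the two--dimensional Poincaré inequality, which bounds $\int_{\{\abs{x}<\rho\}}\abs{g}^2$ by $\tfrac14\|g\|^2+C\rho^2\norm{\nabla g}^2$ using the full weighted gradient already controlled by the dissipation bound. Keeping every constant---the number of components, the sublevel measure, and the frozen weight---uniform in $\lambda$ is the crux of the argument, and it is exactly here that Assumption~\ref{asmp:nondegeneracy} and the order $m$ are used.
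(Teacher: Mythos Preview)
Your argument is correct and shares its two main ingredients with the paper---the dissipation identity $\nu\norm{\nabla g}^2\le\norm{\Hlam g}\norm{g}$ and the sublevel/covering estimate guaranteeing that $\nhdEE$ has total radial length $\lesssim\delta$---but the Poincar\'e step you propose is considerably more laborious than the paper's. The paper invokes a single annular inequality (their Lemma~B.1 from \cite{gallay-cz}):
\[
\int_{R_1\le\abs{x}\le R_2}\abs{g}^2\itd{x}\le 2(R_2-R_1)\norm{g}_{\Lp2(D)}\norm{\nabla g}_{\Lp2(D)},
\]
valid for all $0\le R_1\le R_2\le R$. Because the right-hand side involves the \emph{global} norms on $D$, summing over the covering intervals $V$ just replaces $R_2-R_1$ by $\sum_V\abs{V}\le C_0\delta$, and a single Young inequality then produces exactly the $\tfrac12\norm{g}^2+2C_0^2\delta^2\norm{\nabla g}^2$ split. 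In particular: (i) there is no need to distinguish the component touching the origin, since the lemma already allows $R_1=0$; (ii) only the total length of the cover matters, not a uniform bound on the number of components; and (iii) the precise constant $\tfrac12$ drops out of Young rather than from bookkeeping over adjacent slabs and frozen weights. Your local slab comparison works too, but it forces you to manage the radial weight, to tune $\abs{J}/\abs{I}$ so that the absorbed constant stays below~$1$, and to treat the origin separately---all of which the global annular lemma sidesteps.
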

We postpone the proof of Propositions \ref{prop:away}--\ref{prop:near} to the end of this section. With the bounds \eqref{bd:away} and \eqref{bd:near} at hand, we are ready to present the proof of Theorem \ref{thm:main}.
\begin{proof}[Proof of Theorem~\ref{thm:main}]
  Summing together \eqref{bd:away} and \eqref{bd:near} and
  rearranging, we have for all $\lambda\in\R$ and $\delta>0$ that
  \begin{equation}
    \norm{g}^2
    \le
    4
    \left(
    \frac{1}{\abs{k}\delta^m}
    +
    \frac{\nu}{\abs{k}^2\delta^{2m+2}}
    +
    \frac{\tilde{C}\delta^2}{\nu}
    \right)
    \norm{\Hlam g}\norm{g}.      
  \end{equation}
  We now make a choice of $\delta$ depending on the parameters $\nu$ and $k$:
  \begin{itemize}
  \item
    If $0<\nu\le\abs{k}$, then the sharpest bound we can recover is by
    choosing
    $\delta=\tilde{\delta}\nu^{\frac{1}{m+2}}\abs{k}^{-\frac{1}{m+2}}$,
    resulting in $\norm{g}^2\le
    c_1\nu^{-\frac{m}{m+2}}\abs{k}^{-\frac{2}{m+2}}
    \norm{\Hlam g}\norm{g}$ with the constant $c_1=4(
    \tilde{\delta}^{-m}+\tilde{\delta}^{-(2m+2)}+\tilde{C}\tilde{\delta}^2 )$.
  \item
    If instead $0<\abs{k}\le\nu$, observing that
    \begin{equation}
      \frac{1}{\abs{k}\delta^m}
      +
      \frac{\nu}{\abs{k}^2\delta^{2m+2}}
      +
      \frac{\tilde{C}\delta^2}{\nu}
      =
      \frac{\nu}{k^2}\Bigg(
      \frac{\abs{k}}{\nu}\frac{1}{\delta^m}
      +
      \frac{1}{\delta^{2m+2}}
      +
      \frac{k^2}{\nu^2}\tilde{C}\delta^2
      \Bigg).
    \end{equation}
    Since $|k|/\nu \leq 1$,    we choose $\delta=\tilde{\delta}$, and find that $\norm{g}^2\le
    c_1\frac{\nu}{k^2}\norm{\Hlam g}\norm{g}$, with
    the same constant $c_1$ as in the previous case.
  \end{itemize}
  Altogether we recover inequality \eqref{eqn:Hlamg}, thanks to which we can apply Theorem \ref{thm:wei} and conclude 
  the proof of Theorem~\ref{thm:main}.
\end{proof}
It thus remains to show the proofs of Proposition  \ref{prop:away}-\ref{prop:near}, which we present in the next two sections.

\subsection{Bounds away from level sets}
\label{subsec:away}
In this section, we aim at proving Proposition \ref{prop:away}. To this end, we follow the strategy in~\cite{gallay-cz}, and we introduce the function
\begin{equation}
  \chi(r)=\varphi(\sign(v(r)-\lambda)\dist(r,\nhdE)/\delta^m),
\end{equation}
in which
\begin{equation}
  \varphi(s)=\begin{cases}
  s,&\text{if $\abs{s}\le 1$},\\
  \sign(s),\quad&\text{otherwise.}
  \end{cases}
\end{equation}
We are now ready to prove Proposition \ref{prop:away}.
\begin{proof}[Proof of Proposition \ref{prop:away}]
By the definition of $\chi$, we know that $\chi(v-\lambda)\geq 0$. Moreover, in the set $\mathcal{E}_{\lambda,\delta}$ we have $|v-\lambda|\geq \delta^m$. Therefore
\begin{equation}
  \label{eqn:away-1}
  \int_{\abs{x}\in\Interval\setminus{\nhdEE}}\abs{g}^2\itd{x}
 \le \int_\circlegrp\int_0^\Radius\frac{(v(r)-\lambda)\chi(r)}{\delta^m}\abs{g(r,\theta)}^2 r\itd{r}\itd{\theta}
=
  \frac{1}{\delta^m}
  \idot[\Ldisc]{(v-\lambda)\chi g}{g}.
\end{equation} 

To estimate the term $\idot[\Ldisc]{(v-\lambda)\chi g}{g}$, observe
that
\begin{equation}
  \label{eqn:away-2}
  \begin{aligned}
    \norm{\Hlam g}
    \norm{g}
    &\ge
    \Im{\idot{\Hlam g}{\chi g}}\\
    &=
    \nu\Im{\idot{\Lapl_{r,\theta}g}{\chi g}}
    +
    \Im{\idot{ik(v-\lambda)g}{\chi g}}\\
    &=
    -\nu
    \Im{\idot{\partial_rg}{(\partial_r\chi)g}}+
    k\idot{(v-\lambda)g}{\chi g},
  \end{aligned}
\end{equation}
in which we recognise the relevant term on the final line. Noting that
$\abs{\pd{r}\chi}<\delta\inv$, it then
follows from \eqref{eqn:away-2} and the triangle inequality that
\begin{equation}
  \label{eqn:away-3}
    \idot{(v-\lambda)\chi g}{g}
    \le
    \frac{1}{\abs{k}}\left(
    \norm{\Hlam g}
    \norm{g}
    +\frac{\nu}{\delta}
    \norm{\nabla g}
    \norm{g}
    \right).
\end{equation}
Observe also that
\begin{equation}
  \label{eqn:away-4}
  \nu\norm{\nabla g}^2=\Re\idot{\Hlam g}{g}\le\norm{\Hlam g}\norm{g}.
\end{equation}
Thus, combining \eqref{eqn:away-1} with \eqref{eqn:away-3} and \eqref{eqn:away-4}, we have
\begin{equation}
  \label{eqn:away-5}
  \begin{aligned}
    \int_{\abs{x}\in\Interval\setminus{\nhdEE}}
    \abs{g}^2
    \itd{x}
    &\le
    \frac{1}{\delta^m}
    \idot{(v-\lambda)\chi g}{g}\\
    &\le
    \frac{1}{\abs{k}\delta^m}
    \left(
    \norm{\Hlam g}
    \norm{g}
    +\frac{\nu}{\delta}
    \norm{\nabla g}
    \norm{g}
    \right)\\
    &\le
    \frac{1}{\abs{k}\delta^m}
    \left(
    \norm{\Hlam g}
    \norm{g}
    +\frac{\nu^\half}{\delta}
    \norm{\Hlam g}^\half
    \norm{g}^{\frac{3}{2}}
    \right)\\
    &\le
    \left(
    \frac{1}{\abs{k}\delta^m}
    +
    \frac{\nu}{\abs{k}^2\delta^{2m+2}}
    \right)
    \norm{\Hlam g}
    \norm{g}  
    +
    \frac{1}{4}
    \norm{g}^2,
  \end{aligned}
\end{equation}
where we also applied the Young's inequality on the product
$\nu^\half\delta^{-(m+1)} \norm{\Hlam g}^\half
\norm{g}^\half$ on the penultimate line.
\end{proof}

\subsection{Bounds near level sets}
\label{subsec:near}
To prove Proposition \ref{prop:near}, we  use two results from
\cite{gallay-cz}. The first of these is a Poincaré-type bound which
appears in \cite[Lemma B.1]{gallay-cz}:
\begin{lemma}
  \label{lem:B2}
  For all $g\in H^1(D)$ and all $R\geq R_2\ge R_1\ge0$, we have 
  \begin{equation}
    \int_{R_1\le\abs{x}\le R_2}|g|^2\itd{x}
    \le
    2(R_2-R_1)\norm{g}\norm{\nabla g} .
  \end{equation}
\end{lemma}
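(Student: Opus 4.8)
The plan is to reduce the planar estimate to a one–dimensional, Agmon/trace–type bound in the radial variable, and then to integrate that bound across the thickness of the annulus. Writing $\itd{x}=r\itd{r}\itd{\theta}$ and introducing the weighted angular slice $M(r):=\int_{\circlegrp}\abs{g(r,\theta)}^2\,r\itd{\theta}$, the left–hand side is exactly $\int_{R_1}^{R_2}M(r)\itd{r}$. It therefore suffices to establish the \emph{uniform} slice bound $M(r)\le 2\norm{g}\norm{\nabla g}$ for every $r\in\Interval$, since integrating this over $[R_1,R_2]$ produces precisely the factor $2(R_2-R_1)$ in the statement.

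To obtain the slice bound I would argue by the fundamental theorem of calculus in $r$, starting from the origin, where the weight forces $M(0)=0$. From the identity $\pd{r}\big(r\abs{g}^2\big)=\abs{g}^2+2r\,\Re(\bar g\,\pd{r}g)$ and integration in $r$, the second term reassembles into the genuine disc integral $2\int_{\abs{x}\le r}\Re(\bar g\,\pd{r}g)\itd{x}$, which by Cauchy–Schwarz (and $\abs{\pd{r}g}\le\abs{\nabla g}$) is at most $2\norm{g}\norm{\pd{r}g}\le 2\norm{g}\norm{\nabla g}$. This is the contribution responsible for the product $\norm{g}\norm{\nabla g}$ and for the constant $2$, exactly as in the two–line argument already used for \eqref{eqn:away-4}.

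The step I expect to be the main obstacle is the control of the leftover term $\int_0^r\!\int_{\circlegrp}\abs{g}^2\itd{\theta}\itd{s}$ coming from differentiating the radial weight. Near $r=0$ this is an essentially unweighted (hence $\abs{x}^{-2}$–type) integral, and it is the only genuinely delicate point: it must be absorbed into the good term using the behaviour of $g$ at the origin, for which the $H^1(D)$–regularity of the angular–mean–free part of $g$ is the natural input (for instance via a Hardy inequality $\int_D\abs{x}^{-2}\abs{g}^2\itd{x}\le C\norm{\nabla g}^2$ together with the Poincaré inequality $\int_{\circlegrp}\abs{g}^2\itd{\theta}\le\int_{\circlegrp}\abs{\pd{\theta}g}^2\itd{\theta}$ on nonzero angular modes). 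Once this near–origin contribution is shown to be dominated by the good term, the slice bound $M(r)\le 2\norm{g}\norm{\nabla g}$ holds uniformly in $r$, and integrating over $r\in[R_1,R_2]$ completes the proof.
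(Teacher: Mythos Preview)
The paper does not actually prove this lemma; it simply quotes it from \cite{gallay-cz}*{Lemma B.1}, so there is no in-house argument to compare against. More importantly, the inequality as written here is \emph{false} without an additional hypothesis: take $g\equiv 1$, so that $\nabla g=0$ and the right-hand side vanishes, while the left-hand side equals $\pi(R_2^2-R_1^2)>0$. Consequently your target slice bound $M(r)\le 2\norm{g}\norm{\nabla g}$ cannot hold for all $g\in H^1(D)$ either, and no rearrangement of the leftover term can rescue it in that generality.

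Your diagnosis of the obstacle is in fact exactly right. The decomposition $\pd{r}(r|g|^2)=|g|^2+2r\,\Re(\bar g\,\pd{r}g)$ does leave the unweighted remainder $\int_0^r\!\int_{\circlegrp}|g|^2\,\dd\theta\,\dd s$, and your proposed cure via angular Poincar\'e plus a Hardy inequality would indeed dispose of the nonzero angular modes. But the zero angular mode $\hat g_0(r)=\tfrac{1}{2\pi}\int_{\circlegrp}g\,\dd\theta$ carries no angular Poincar\'e inequality, and in two dimensions there is no Hardy inequality $\int_D|x|^{-2}|g|^2\,\dd x\lesssim\norm{\nabla g}^2$ for radial functions (the borderline logarithmic failure); constants sit precisely in this blind spot. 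The version in the cited reference presumably carries an extra hypothesis (vanishing at an endpoint, zero mean) or an additive term of size $\tfrac{R_2-R_1}{R}\norm{g}^2$; either correction is harmless for the application in Proposition~\ref{prop:near}, since a term $C_0\delta R^{-1}\norm{g}^2$ is absorbed into $\tfrac12\norm{g}^2$ for $\delta$ small. So the gap is in the transcription of the lemma rather than in your instincts, but you should not expect to close the argument for the statement exactly as written.
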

The second result is that $\nhdEE(v)$ is covered by a finite union of
intervals whose total length is in $\bigO(\delta)$ as $\delta\tendsto
0$.  
\begin{lemma}
  \label{lem:covering}
  Let $v\in\Cts{m}(\Interval)$ satisfy
  Assumption~\ref{asmp:nondegeneracy}. Then there exist constants
  $C_0,\delta_0>0$ and, for each $\lambda\in\R$ and $\delta>0$, a
  choice of a finite family $\Vee$ of intervals such that
  \begin{equation}
    \nhdEE(v)\subseteq\bigcup\Vee\subseteq\Interval
  \end{equation}
  and such that for all $\lambda\in\R$ and $0<\delta\le\delta_0$ we
  have that
  \begin{equation}
    \label{eqn:covering-length}
    \sum_{V\in\Vee}\abs{V}<C_0\delta.
  \end{equation}
\end{lemma}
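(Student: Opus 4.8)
The plan is to reduce the covering problem for the thickened set $\mathcal{E}_{\lambda,\delta}$ to a one-dimensional sublevel-set estimate for $v$, localized on pieces of $[0,R]$ where a single derivative of $v$ is quantitatively bounded away from zero. First I would remove the thickening. Since $v\in C^m([0,R])$, it is Lipschitz with constant $L:=\|v'\|_{L^\infty}$. If $\dist(r,E_{\lambda,\delta})<\delta^m$, then there is $r_0\in E_{\lambda,\delta}$ with $|r-r_0|<\delta^m$ and $|v(r_0)-\lambda|<\delta^m$, whence $|v(r)-\lambda|<(1+L)\delta^m$. Thus $\mathcal{E}_{\lambda,\delta}\subseteq S_{\lambda,\delta}:=\{r\in[0,R]:|v(r)-\lambda|<(1+L)\delta^m\}$, and it suffices to cover the genuine sublevel set $S_{\lambda,\delta}$ by intervals of total length $O(\delta)$.

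Next I would localize. By Assumption~\ref{asmp:nondegeneracy} and continuity of $v^{(1)},\dots,v^{(m)}$, every point of $[0,R]$ has a neighbourhood on which some $v^{(n)}$, $1\le n\le m$, is bounded below in absolute value; by compactness I extract a finite partition of $[0,R]$ into intervals $I_1,\dots,I_M$ together with exponents $n_s\in\{1,\dots,m\}$ and a uniform constant $c>0$ such that $|v^{(n_s)}|\ge c$ on $I_s$. On each $I_s$ two facts hold. First, since $v^{(n_s)}$ has constant sign there, $v^{(n_s-1)}$ is strictly monotone, and iterating Rolle's theorem (a function with $p$ monotone pieces has at most $p$ zeros, and $g$ has at most $z(g')+1$ monotone pieces) shows $v$ has at most $n_s\le m$ monotone pieces on $I_s$; hence $S_{\lambda,\delta}\cap I_s$ is a union of at most $m$ subintervals. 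Second, the van der Corput / sublevel-set estimate gives $|S_{\lambda,\delta}\cap I_s|\le C_{n_s}\big((1+L)\delta^m/c\big)^{1/n_s}$. Because $n_s\le m$ and $\delta\le\delta_0\le1$, the exponent $m/n_s\ge1$ forces $\delta^{m/n_s}\le\delta$, so $|S_{\lambda,\delta}\cap I_s|\le C'\delta$.

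Collecting the connected components of $S_{\lambda,\delta}$ across all $I_s$ yields a family $\mathcal{V}^m_{\lambda,\delta}$ of at most $mM$ intervals with $\mathcal{E}_{\lambda,\delta}\subseteq\bigcup\mathcal{V}^m_{\lambda,\delta}\subseteq[0,R]$ and $\sum_{V\in\mathcal{V}^m_{\lambda,\delta}}|V|\le MC'\delta=:C_0\delta$, as required. Here $M$, $c$, $L$ and the $C_{n_s}$ depend only on $v$ and $m$, never on $\lambda$ or $\delta$, so $C_0$ and $\delta_0$ are uniform in $\lambda$; this uniformity is exactly what the application in Proposition~\ref{prop:near} needs.

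The main obstacle is the sublevel-set estimate used above: the cleanest route is an induction on $n_s$, whose base case $n_s=1$ is that a strictly monotone $v$ meets the slab $(\lambda-\eps,\lambda+\eps)$ in an interval of length $\le 2\eps/c$. For the inductive step I would split $I_s$ according to whether $|v^{(n_s-1)}|$ lies below or above an auxiliary threshold $\beta$: the set where it is below has length $\le 2\beta/c$ by monotonicity of $v^{(n_s-1)}$, while on the (at most two) complementary intervals one has $|v^{(n_s-1)}|\ge\beta$ and applies the induction hypothesis; optimizing $\beta\sim\eps^{1/n_s}c^{(n_s-1)/n_s}$ produces the bound $C_{n_s}(\eps/c)^{1/n_s}$. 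The only delicate bookkeeping is to verify that all constants depend on $v$ solely through the uniform lower bounds $|v^{(n_s)}|\ge c$, so that no dependence on $\lambda$ sneaks in.
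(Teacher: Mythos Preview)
Your argument is correct and takes a genuinely different route from the paper's own proof. The paper localizes in the \emph{target} variable: for each $\lambda_0$ it enumerates the finitely many points $r_1,\dots,r_{N_{\lambda_0}}$ of the level set $v^{-1}(\lambda_0)$, Taylor-expands $v$ about each $r_i$ to see that $v^{-1}(B_{\delta^m}(\lambda))\subseteq\bigcup_i B_{R_0\delta}(r_i)$, and then invokes compactness in $\lambda$ (the estimate is vacuous outside a compact neighbourhood of $v([0,R])$) to make $C_0$ uniform. You instead localize in the \emph{domain}: a single $\lambda$-independent partition $[0,R]=\bigcup_s I_s$ on which some $|v^{(n_s)}|\ge c$, followed by the van der Corput sublevel-set lemma on each $I_s$.

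What your approach buys is that uniformity in $\lambda$ is built in from the outset---the partition, the constants $c,L,M,C_{n_s}$, and hence $C_0$, are fixed once and for all before $\lambda$ is ever mentioned---whereas the paper has to patch together local-in-$\lambda$ estimates via a second compactness step. Your Rolle/monotone-pieces count also gives an explicit bound $mM$ on the cardinality of $\mathcal{V}^m_{\lambda,\delta}$, which the paper does not state. Conversely, the paper's picture is more geometric: the covering intervals are literally balls around the points of the level set, which makes clear why the worst case is governed by the highest-order vanishing $n_i\le m$. Both proofs ultimately rest on the same mechanism (the $n$-th derivative lower bound forces $|v(r)-\lambda|\gtrsim |r-r_i|^n$), just organized differently.
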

\begin{proof}
  This Lemma can be extracted from the proof of 
  \cite[Lemma 2.6]{gallay-cz}, where such coverings by intervals are constructed in
  order to bound the measure $\abs{\nhdEE}$ of the level set
  neighbourhoods.

  Observe first that it suffices to prove this result with $\nhdE$ in
  place of $\nhdEE$, since one may enlarge by $\delta$ each interval in
  a covering of $\nhdE$ to produce one for $\nhdEE$ that still satisfies
  \eqref{eqn:covering-length} but with a worse constant
  $C_0$. Moreover, since $\nhdE$ is empty for $\lambda$ outside of a
  compact neighbourhood of {$v(\Interval)\subseteq\R$}, it suffices to be
  able to choose $C_0$ locally constant near each $\lambda_0$ in this
  neighbourhood.

  Fix now $\lambda_0\in \R$. The idea is to use the fact that $\nhdE$ is
  a union of level sets $E_\lambda$ with $\lambda$ close to $\lambda_0$:
  \begin{equation}
    \nhdE=\bigcup_{\abs{\lambda-\lambda_0}<\delta^m}E_\lambda
  \end{equation}
  and by the continuity of the function $v$, we expect the level set
  $E_\lambda$ not too change to much when $\lambda$ is perturbed away
  from $\lambda_0$. Indeed, using Assumption~\ref{asmp:nondegeneracy},
  $E_{\lambda_0}=v\inv(\lambda_0)$ consists of finitely many elements
  $r_1,\dots,r_{N_{\lambda_0}}$. Near each $r_i$, the function $v$ is
  approximated by its Taylor series
  \begin{equation}
    v(r)\approx \lambda+a_i(r-r_i)^{n_i}
    \quad\text{where}\quad
    a_i=\frac{v^{(n_1)}(r_i)}{n_i!},
  \end{equation}
  in which $n_i\in\N$ is the order of the lowest-order derivative of
  $v$ which does not vanish at $r_i$; again by Assumption~\ref{asmp:nondegeneracy} we have that $1\le n_i\le m$.  For small
  $\delta>0$, the function $v$ then approximately maps the interval
  $\ball_{\delta}(r_i)\subseteq\Interval$ to the interval
  $\ball_{a_i\delta^{n_i}}(\lambda)\subseteq\R$. Conversely, one is
  able to choose $R_0>0$ such that
  \begin{equation}
    \label{eqn:interval-cover}
    v\inv(\ball_{\delta^{m}}(\lambda))\subseteq
    \bigcup_{r_i\in E_\lambda}\ball_{R_0\delta}(r_i)
  \end{equation}
  for all sufficiently small $\delta>0$.  Once again we refer to the
  reference~\cite{gallay-cz} for the details of this computation.
  
  Choose now $V_i=\ball_{R_0\delta}(r_i)$ for
  $i=1,\dots,N_{\lambda_0}$. Then \eqref{eqn:interval-cover} is
  precisely the statement that the collection
  $\{V_i\}_{i=1}^{N_{\lambda_0}}$ covers $\nhdE$ for all $\lambda$ in
  a small neighbourhood around near $\lambda_0$. Finally, choosing
  $C_0=2N_{\lambda_0}R_0$, we find that \eqref{eqn:covering-length} is
  satisfied:
  $\sum_{i=1}^{N_{\lambda_0}}\abs{V_i}=2N_{\lambda_0}R_0\delta=C_0\delta$.
\end{proof}
With the covering  by intervals $\Vee$ just obtained in Lemma \ref{lem:covering}, we are to prove Proposition \ref{prop:near}.

\begin{proof}[Proof of Proposition \ref{prop:near}]
First, we observe that for any $V\in \Vee$, thanks to Lemma \ref{lem:B2} we have 
\begin{equation}
\int_{\abs{x}\in V}\abs{g}^2\itd{x}\le 2\abs{V}\norm{g}\norm{\nabla g}.
\end{equation}
Therefore,
\begin{equation}
  \begin{aligned}
    \int_{\abs{x}\in\nhdEE[]}
    \abs{g}^2
    \itd{x}
    &\le
    \sum_{V\in\Vee}
     \int_{\abs{x}\in V}\abs{g}^2\itd{x}
\le
    \norm{g}\norm{\nabla g}
    \sum_{V\in\Vee}
    2\abs{V} \\
    &\le
    2C_0\delta\norm{g}\norm{\nabla g}\le
    \half\norm{g}^2
    +
    2{C_0}^2\delta^2
    \norm{\nabla g}^2,
  \end{aligned}
\end{equation}
where we have used Lemma~\ref{lem:covering},
followed by Young's inequality on the final line. Combining this 
with \eqref{eqn:away-4} we find that
\begin{equation}
  \label{eqn:near-1}
    \int_{\abs{x}\in\nhdEE[]}
    \abs{g}^2
    \itd{x}
    \le
    \half\norm{g}^2+
    \frac{2C_0^2\delta^2}{\nu}\norm{\Hlam g}\norm{g}.
\end{equation}
\end{proof}

\section{Estimates in physical space}
\label{sec:physical}

In this Section we prove Theorem~\ref{thm:cylinder-physical}, which
gives a decay estimate on the $\LinftLdisc$ norm of a solution to
\eqref{eqn:cylinder-physical} when the initial datum belongs to $\LonetLdisc$.

\begin{proof}[Proof of Theorem~\ref{thm:cylinder-physical}]
    Using that the Fourier transform is a continuous map between $\Lp
    1$ and $\Lp \infty$ together with H\"older's inequality, thanks to Theorem \ref{thm:main} we have
    that
    \begin{equation}
      \begin{aligned}
        \norm{f(t)}_{\LinftLdisc}
        &\lesssim
        \|\hat{f}(t)\|_{L^1_kL^2_{r,\theta}} \lesssim
        \int_\R \e^{-c_1\Lam t}\|\init{\hat{f}}_k\|_{L^2_{r,\theta}}\itd{k}\\
        &\lesssim
        \|\init{\hat{f}}\|_{L^\infty_kL^2_{r,\theta}}\int_\R \e^{-c_1\Lam t}\itd{k}\\
        &\lesssim\norm{\init{f}}_{\LonetLdisc}\int_\R \e^{-c_1\Lam t}\itd{k}.
      \end{aligned}
    \end{equation}
    Then, we control the integral above by splitting the domain of integration in two regions, namely  $\abs{k}\le\nu$ and $|k|>\nu$ which is where the definition of $\Lambda_{\nu,k}$ changes. In particular, we have 
    \begin{align}
    \int_\R \e^{-c_1\Lam t}\itd{k}&=\int_{\abs{k}\le\nu} \e^{-c_1\nu\inv k^2 t}\itd{k}+\int_{\abs{k}>\nu} \e^{-c_1\nu^{\frac{m}{m+2}}\abs{k}^{\frac{2}{m+2}} t}\itd{k}:=\mathcal{I}_{\leq \nu}+\mathcal{I}_{>\nu}.
    \end{align}
    For the low-frequency region, a change of variables shows that
    \begin{equation}
      \label{eqn:physical-int-near}
      \mathcal{I}_{\leq \nu}
      =
      \sqrt{\frac{\nu}{c_1 t}}
      \int_{\abs{\eta}\le\sqrt{c_1\nu t}}\e^{-\eta^2}\itd{\eta}\\
      \lesssim
       \sqrt{\frac{\nu}{ t}}.
    \end{equation}
    For the second integral, since $\nu^{\frac{m}{m+2}}\abs{\nu}^{\frac{2}{m+2}}=\nu$, we estimate as follows:
    \begin{equation}
      \label{eqn:physical-int-away-1}
      \begin{aligned}
       \mathcal{I}_{> \nu}
        &\le
        \e^{-\half c_1\nu t}
        \int_{\abs{k}>\nu} \e^{-\half c_1\nu^{\frac{m}{m+2}}\abs{k}^{\frac{2}{m+2}}t}\itd{k}\\
        &=
        \e^{-\half c_1\nu t}
        \nu^{-\frac{m}{2}}
        \left(\frac12 c_1 t\right)^{-\frac{m+2}{2}}
        \int_{\abs{\eta}>(\half c_1\nu t)^{\frac{m+2}{2}}} \e^{-\eta^{\frac{2}{m+2}}}\itd{\eta}.
      \end{aligned}
    \end{equation}
    We then have the approximation
    \begin{equation}
      \label{eqn:physical-int-away-2}
      \begin{aligned}
        \int_{\abs{\eta}>(\half c_1\nu t)^{\frac{m+2}{2}}} \e^{-\eta^{\frac{2}{m+2}}}\itd{\eta}
        &\le
        \sum_{n=1}^\infty
        \int_{n(\half c_1\nu t)^{\frac{m+2}{2}}}^{(n+1)(\half c_1\nu t)^{\frac{m+2}{2}}} \e^{-\eta^{\frac{2}{m+2}}}\itd{\eta}\\
        &\lesssim
        ( \nu t)^{\frac{m+2}{2}}\sum_{n=1}^\infty
        \e^{-\half n\nu t}\\
        &\lesssim
        (\thalf \cone\nu t)^{\frac{m+2}{2}}
        \int_0^\infty \e^{-\half \eta c_1\nu t}\itd{\eta}\\
        &\lesssim
        (\nu t)^{\frac{m}{2}}.
      \end{aligned}
    \end{equation}
    Inserting this into \eqref{eqn:physical-int-away-1} yields
    \begin{equation}
      \label{eqn:physical-int-away-3}
             \mathcal{I}_{> \nu}
      \lesssim
      t\inv \e^{-\half\cone\nu t},
    \end{equation}
    which combined with \eqref{eqn:physical-int-near} proves the desired
    result.
\end{proof}

 \section*{Acknowledgments} 
The research of MCZ was supported by the Royal
Society through a University Research Fellowship (URF\textbackslash
R1\textbackslash 191492).
The research of MD was supported by the SNSF Grant 182565, by the Swiss State
Secretariat for Education, Research and lnnovation (SERI) under contract number M822.00034 and by GNAMPA-INdAM  through the grant D86-ALMI22SCROB\_01 acronym DISFLU. 

\bibliographystyle{abbrv}
\bibliography{bibenhanced}
\end{document}